\newlength{\hchng}
\newlength{\vchng}
\newtheorem{thm}{Theorem}[section]
\newtheorem{lemma}[thm]{Lemma}
\newtheorem{preremark}[thm]{Remark}
\newenvironment{remark}{\begin{preremark}\rm}{\medskip \end{preremark}}
\numberwithin{equation}{section}
\newcommand{\norm}[1]{\left\Vert#1\right\Vert}
\newcommand{\abs}[1]{\left\vert#1\right\vert}
\newcommand{\R}{\mathbb R}
\newcommand{\ep}{\varepsilon}
\newcommand{\grad} {\nabla}
\newcommand{\lap} {\triangle}
\newcommand{\dd} {\; \mathrm{d}}
\DeclareMathOperator*{\osc}{osc}
\DeclareMathOperator{\dv}{div}
\newcommand{\Q}{Q}
\title{H\"older estimates for advection fractional-diffusion equations}
\author{Luis Silvestre}
\begin{document}
\maketitle
\begin{abstract}
We analyse conditions for an evolution equation with a drift and fractional diffusion to have a H\"older continuous solution. In case the diffusion is of order one or more, we obtain H\"older estimates for the solution for any bounded drift. In the case when the diffusion is of order less than one, we require the drift to be a H\"older continuous vector field in order to obtain the same type of regularity result.
\end{abstract}

\section{Introduction}
This papers concerns the solutions of an equation of the form
\begin{equation} \label{e:main}
u_t + b \cdot \grad u + (-\lap)^s u = f.
\end{equation}

We will prove that the solution $u$ becomes immediately H\"older continuous. In the case $s \in [1/2,1)$, all we require for the vector field $b$ is that it is bounded. In the case $s \in (0,1/2)$, we require $b$ to be in the H\"older class $C^{1-2s}$.

The case $s =1/2$ is often referred to as the critical case because the diffusion is an operator of the same order as the advection. Both terms in \eqref{e:main} have the same weight at all scales and no perturbation techniques can be applied to obtain regularity of the solution $u$. This critical case was already studied in \cite{silvestre2009differentiability}, where the result was used to establish the existence of classical solutions for the Hamilton-Jacobi equation with critical fractional diffusion. The main ideas in this paper originated in that work. In future work, we plan to apply the result of this paper to other nonlinear equations like for example conservations laws with fractional diffusion.

It is interesting to compare this result in the critical case $s=1/2$ with the result in \cite{caffarelli2006drift}. In that paper the same equation is studied and the H\"older continuity result is obtained using De Giorgi's approach for parabolic equations. The only difference is that the assumptions for $b$ in \cite{caffarelli2006drift} are that it belongs to the BMO class and it is divergence free. In this paper (or in \cite{silvestre2009differentiability}) we do not assume $b$ to be divergence free, but we need the slightly stronger assumption that $b \in L^\infty$ instead of just belonging to the BMO class. The methods used in the proofs are of a different nature. In \cite{caffarelli2006drift}, thanks to the assumption that $\dv b = 0$, they use variational techniques (in the style of De Giorgi), whereas in this article our techniques are purely non-variational and are ultimately based on quantitative comparison principles which are arguably simpler. An alternative approach is used in \cite{kiselev2010variation} where the H\"older continuity is obtained through the study of the dual equation. They still require that $\dv b = 0$ in order for the dual equation to have the same advection-diffusion form.

The case $s<1/2$ is known as the supercitical case because the diffusion is of lower order than the advection. This means that in small scales the drift term will be stronger than the diffusion which seems to suggest that it would not be possible to prove any regularity for $u$. However, if we assume a H\"older modulus of continuity for $b$, we can compensate this deterioration of our control of the drift term in small scales by a change of variables following the flow of $b$. The supercitical case is the main focus of this article.

It is interesting to compare our result in the case $s<1/2$ with the result of \cite{MR2483817}. In that article, Constantin and Wu explore the applicability of Caffarelli-Vasseur \cite{caffarelli2006drift} techniques to the supercritical regime. They need to assume that $b \in C^{1-2s}$ and $\dv b = 0$. Therefore, in the supercitical regime, our non-variational method provides a strictly better result than the De Giorgi's approach, since the regularity assumption on $b$ is the same, but we do not need to assume $\dv b = 0$. This result is used in \cite{MR2482996} to obtain the well posedness of a modified supercitical quasi-geostrophic equation.

The case $s > 1/2$ is often referred to as the subcritical case because the diffusion is of higher order than the advection in \eqref{e:main}. The method in this paper provides a H\"older continuity result for $u$ if $b \in L^\infty$ in the subcritical case. This condition is far from sharp, since one could obtain the same results under more general conditions. In \cite{MR2283957}, heat kernel estimates are obtained using perturbation techniques for $s \in (1/2,1)$ for vector fields which belong just to a Kato class. In \cite{silvestre2010polynomial}, H\"older estimates are obtained in the case $s \in (1/2,1)$ but for $b \cdot \grad u$ replaced by a term with superlinear growth respect to $\grad u$. The subcritical case is not the main objective of this paper. We still include it in our main theorem to show what result the method provides.

Here is our main theorem.

\begin{thm}\label{t:main}
Let $u$ be a bounded function in $\R^n \times [0,1]$ which solves the equation \eqref{e:main}. Assume that $s \in (0,1)$, $f \in L^\infty$. If $s \in [1/2,1)$, we assume $b \in L^\infty$. If $s \in (0,1/2)$, we assume $b \in C^{1-2s}$. Then $u$ is H\"older continuous for positive time $t \in (0,1]$. Moreover, there is an estimate
\[ |u(x,t) - u(y,r)| \leq C \frac{ |x-y|^\alpha + |t-r|^{\alpha/(2s)}}{t^{\alpha/(2s)}} (\norm{u}_{L^\infty} + \norm{f}_{L^\infty})\]
for every $x,y \in B_{1/2} \text{ and } 0\leq r \leq t \leq 1$, where the constants $C$ and $\alpha$ depend on $s$, $n$ and $\norm{b}_{C^{1-2s}}$ but not on $u$ or $f$.
\end{thm}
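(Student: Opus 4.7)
The plan is to follow the now-standard nonlocal Krylov--Safonov/De Giorgi scheme: prove a single diminish-of-oscillation lemma in a unit reference cylinder and then iterate at dyadic scales. Let $Q_r := B_r\times(-r^{2s},0]$. Fix a base point $(x_0,t_0)$ with $t_0>0$ and consider the parabolic rescaling
\[ u_r(x,t) = \frac{u(x_0+rx,\, t_0+r^{2s}t)}{\norm{u}_{L^\infty}+\norm{f}_{L^\infty}}, \]
which solves an equation of the same form with drift $b_r(x,t) = r^{2s-1}b(x_0+rx,t_0+r^{2s}t)$ and right-hand side of size at most $r^{2s}$. If I can prove that $\osc_{Q_{1/2}} u_r \le 1-\theta$ for some universal $\theta>0$ and all sufficiently small $r$, then chaining this bound across dyadic radii $r=2^{-k}$ yields $\osc_{Q_r} u \le Cr^\alpha$ for a universal $\alpha>0$, which, after undoing the rescaling and translation, is exactly the estimate stated in Theorem \ref{t:main}.

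The rescaling is benign when $s\in[1/2,1)$, since $\norm{b_r}_{L^\infty}\le r^{2s-1}\norm{b}_{L^\infty}$ stays bounded (and actually decays when $s>1/2$). In the supercritical regime $s<1/2$, however, $b_r$ blows up in $L^\infty$ as $r\to 0$, so a preliminary change of variables is required. Letting $\Psi$ be an integral curve of $b$ through $(x_0,t_0)$ (one such curve exists because $b\in C^{1-2s}$ is continuous), I would set $\tilde u(x,t)=u(x+\Psi(t),t)$; it solves \eqref{e:main} with modified drift $\tilde b(x,t)=b(x+\Psi(t),t)-b(\Psi(t),t)$ obeying the pointwise bound $\abs{\tilde b(x,t)}\le [b]_{C^{1-2s}}\abs{x}^{1-2s}$. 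A direct computation shows that this inequality is scale-invariant under the parabolic rescaling, the factor $r^{2s-1}$ being exactly compensated by $r^{1-2s}$ from rescaling $\abs{x}^{1-2s}$, so the hypothesis persists at every scale and the iteration can proceed.

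The core of the argument is then the diminish-of-oscillation lemma: if $\abs{u}\le 1$ in $\R^n\times[-1,0]$, $u$ solves the equation in $Q_1$, $\norm{f}_{L^\infty}\le\ep_0$ is small, and the drift obeys either $\norm{b}_{L^\infty}\le C_1$ or $\abs{b(x,t)}\le C_1\abs{x}^{1-2s}$, then $\osc_{Q_{1/2}}u\le 2-\theta$. Without loss of generality one of the two halves $\set{u\le 0}$ or $\set{u\ge 0}$ occupies at least half the measure of $B_1\times[-1,-1/2]$; assume the former. Following the barrier approach already used in the critical case \cite{silvestre2009differentiability}, I would construct an explicit smooth function $\varphi$ with $\varphi(0,0)>0$, $\varphi\le 0$ outside $Q_1$, and $\partial_t\varphi + b\cdot\grad\varphi + (-\lap)^s\varphi \le C_0\chi_{B_1\times[-1,-1/2]}$ pointwise. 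Applying the nonlocal parabolic comparison principle to $u-\ep\varphi$ then converts the measure lower bound on $\set{u\le 0}$ into the pointwise inequality $u(0,0)\le 1-\theta$, and a De Giorgi-type iteration of this point-to-measure step upgrades it to a full oscillation decrease on $Q_{1/2}$.

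I expect two difficulties. The minor one is the tail contribution in the nonlocal operator: $(-\lap)^s u(x)$ depends on $u$ everywhere in $\R^n$, so iterating at scale $2^{-k}$ requires control on the rescaled function at all distances, not just in $B_1$. This is handled by replacing the hypothesis $\abs{u}\le 1$ with the scale-invariant growth bound $\abs{u(x,t)-u(0,0)}\le(1+\abs{x})^\alpha$ and verifying that the diminish-of-oscillation lemma tolerates this weaker global control, which restricts how large $\alpha$ can be chosen in terms of $s$. The main obstacle, however, is making the barrier argument work in the supercritical regime with the pointwise drift bound $\abs{\tilde b(x,t)}\le K\abs{x}^{1-2s}$: the cross term $\tilde b\cdot\grad\varphi$ grows like $\abs{x}^{1-2s}\abs{\grad\varphi(x)}$ and must be dominated by the negative contribution of $(-\lap)^s\varphi$. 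This forces a careful tuning of the barrier and is precisely where the exact Hölder exponent $1-2s$ is used; any weaker regularity would leave the cross term unbounded at infinity and the argument would break down.
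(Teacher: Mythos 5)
Your overall architecture matches the paper: a diminish-of-oscillation lemma, dyadic iteration, the growth-at-infinity hypothesis to handle the nonlocal tails, and (when $s<1/2$) a preliminary change of variables along an integral curve of $b$ to replace $b$ by a drift vanishing at the origin with $|\tilde b(x,t)|\le [b]_{C^{1-2s}}|x|^{1-2s}$. (You use $\Psi'(t)=b(\Psi(t),t)$ rather than the paper's $A'(t)=b(A(t),0)$, which is a cosmetic improvement.)

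The place where your proposal has a genuine gap is the core point estimate. Your description is: construct a \emph{fixed} smooth barrier $\varphi$ with $\partial_t\varphi+b\cdot\grad\varphi+(-\lap)^s\varphi\le C_0\chi_{B_1\times[-1,-1/2]}$, compare $u$ with $1-\ep\varphi$, and claim this converts the measure bound $|\{u\le 0\}\cap B_1\times[-1,-1/2]|\ge\mu$ into $u(0,0)\le 1-\theta$. As written, this does not close: a comparison with a $u$-independent supersolution $1-\ep\varphi$ never sees the set $\{u\le 0\}$, only its complement, so nothing forces $\varphi(0,0)$ to produce an improvement proportional to $\mu$. The set $\{u\le 0\}$ depends on $u$ and cannot be placed on the right-hand side of a barrier chosen in advance. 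What actually makes this step work in the paper is subtler and is the real content of Lemma \ref{l:point-estimate}: the barrier $m(t)\eta(x,t)$ is \emph{time-evolving}, with $m$ solving the ODE $m'(t)=c_0|\{u(\cdot,t)\le 0\}\cap B_1|-C_1 m(t)$, and the contradiction is obtained by evaluating the equation pointwise at the maximum of $w=u+m\eta-\ep_0(2+t)$, where the integral formula for $(-\lap)^s u$ picks up an extra term $c_0|G|$ precisely because $u\le 0$ on the good set $G$. That is a touching/maximum-point argument, not a comparison-principle argument, and the ODE for $m$ is what encodes the measure information. Your subsequent phrase \emph{``a De Giorgi-type iteration of this point-to-measure step upgrades it to a full oscillation decrease''} is also superfluous: the paper's point estimate already gives $u\le 1-\theta$ on the whole cylinder $B_1\times[-1,0]$ at once, with no secondary iteration.

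A second, smaller issue: you worry that in the supercritical case the barrier must dominate a drift growing like $|x|^{1-2s}$, forcing a delicate tuning. The paper sidesteps this entirely. Lemma \ref{l:diminish-of-oscillation} is stated and proved for a \emph{bounded} drift only, with a compactly supported bump $\eta(x,t)=\beta(|x|+At)$ shrinking at speed $A=\norm{b}_{L^\infty}$; the H\"older decay $|\tilde b(x,t)|\le C|x|^{1-2s}$ is invoked only in the iteration of Theorem \ref{t:main2}, where it ensures the rescaled drift $r^{k\alpha(2s-1)}\tilde b(r^kx,r^{2sk}t)$ remains uniformly bounded on $B_1$. So the scale-invariant drift hypothesis never needs to be carried into the barrier construction itself, and the difficulty you flag as the ``main obstacle'' does not arise in the paper's route.
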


It is our intention to keep this article as simple and accessible as possible. The fractional Laplacian could be replaced by arbitrary integro-differential operators of order $2s$, even with discontinuous coefficients, as in \cite{silvestre2009differentiability}, but we chose to keep only the fractional Laplacian in order to make the article accessible to a larger audience. Moreover, we will concentrate on proving the a priori estimate for Theorem \ref{t:main}, assuming that we have a classical solution. In the last section, we will discuss the notions of weak solution and how the proof would adapt to those scenarios.

\section{Scaling}
\label{s:scaling}
If $u$ satisfies the equation
\[ u_t + b \cdot \grad u + (-\lap)^s u = f \]
then the function $u^\lambda = \lambda^{-\alpha} u(\lambda x,\lambda^{2s} t)$ satisfies
\[ u^\lambda_t + \lambda^{2s-1} b \cdot \grad u^\lambda + (-\lap)^s u^\lambda = \lambda^{2s-\alpha} f^\lambda \]
with $f^\lambda(x,t) = f(\lambda x, \lambda^{2s} t)$.

We define the parabolic cylinders in terms of the scaling of the equation:
\[ \Q_r := B_r \times [-r^{2s},0]. \]

\section{The diminish of oscillation lemma}
\label{s:dimish-of-oscillation}

The proof of the diminish of oscillation lemma uses essentially the same ideas as the proof in \cite{silvestre2009differentiability} for the case $s=1/2$. The generalization to the full range $s \in (0,1)$ does not present major difficulties. In this section we present the proof in full detail.

\begin{lemma}[point estimate] \label{l:point-estimate}
Let $u \leq 1$ in $\R^n \times [-2,0]$ and assume it satisfies the following inequality in $B_R \times [-2,0]$.
\begin{equation} \label{e:ineq}
u_t - A |\grad u| + (-\lap)^s u \leq \ep_0 
\end{equation}
Where $R=2+A$. Assume also that 
\[ \abs{\{u \leq 0\} \cap ( B_1 \times [-2 , -1]) } \geq \mu > 0. \]
Then, if $\ep_0$ is small enough there is a $\theta>0$ such that $u \leq 1-\theta$ in $B_1 \times [-1 , 0]$.

\noindent (the maximal value of $\ep_0$ as well as the value of $\theta$ depend only on $s$, $\mu$ and $n$)
\end{lemma}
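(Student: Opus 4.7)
The plan is a barrier argument based on the comparison principle for the inequation. I would build a smooth function $\Phi(x,t)$ on $\R^n\times[-2,0]$ that (i) dominates $u$ on the parabolic boundary, meaning $\Phi\geq 1$ for $|x|\geq R$ and at $t=-2$; (ii) satisfies $\Phi\leq 1-\theta$ on $B_1\times[-1,0]$; and (iii) is a strict supersolution, $\Phi_t - A|\grad\Phi|+(-\lap)^s\Phi\geq 2\ep_0$, at every point where $u$ could touch $\Phi$ from below. If such a $\Phi$ is available, then $u-\Phi$ cannot have a positive interior maximum: at such a maximum one would have $\grad u=\grad\Phi$ and $(-\lap)^s u\geq (-\lap)^s\Phi$, so the inequation would force $\ep_0\geq 2\ep_0$, a contradiction. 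Hence $u\leq \Phi\leq 1-\theta$ on $B_1\times[-1,0]$, which is the desired conclusion.

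As a first attempt I would take $\Phi_0(x)=1-\theta_0\,\eta(x)$, where $\eta$ is a nonnegative, radially symmetric, smooth bump supported in $B_R$ with $\eta\equiv 1$ on $B_1$. The choice $R=2+A$ in the hypothesis is dictated by the drift: it lets $\eta$ decay gently, so that $\norm{\grad\eta}_{L^\infty}\lesssim 1/A$ and the drift contribution $A|\grad\Phi_0|$ stays bounded. The stationary $\Phi_0$ alone is certainly not a strict supersolution---it cannot be, since the constant $u\equiv 1$ satisfies the inequation with $\ep_0=0$---so the measure hypothesis must enter the picture. The crucial observation is that on the set $E=\{u\leq 0\}\cap B_1\times[-2,-1]$ one has $\Phi-u\geq 1/2$, and this large gap produces a quantitative positive contribution, of size at least $c(n,s)\mu$, in the nonlocal term of the inequation for $u-\Phi$. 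This extra nonlocal bonus is what should tip the supersolution inequality into strictness once $\theta$ and $\ep_0$ are chosen small enough.

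The main difficulty I anticipate is passing from the space-time measure estimate $|E|\geq\mu$ to a pointwise lower bound at the candidate maximum time $t_0\in[-1,0]$, since $(-\lap)^s$ at time $t_0$ only sees the spatial slice $u(\cdot,t_0)$. The natural remedy is to first propagate the sublevel set in time: prove, by an auxiliary barrier argument on the slab $\R^n\times[-2,t]$, that a definite fraction of the mass of $E$ persists, in the sense that $|\{u(\cdot,t)\leq 1/2\}\cap B_R|\geq \mu'>0$ uniformly for $t\in[-1,0]$. With this in hand the supersolution inequality closes. The drift term $-A|\grad u|$ poses no independent obstacle thanks to the scaling $R=2+A$, which keeps $|\grad\Phi|$ of order $1/A$ and therefore $A|\grad\Phi|$ of order one; this is precisely why the statement requires $R=2+A$ rather than a fixed radius.
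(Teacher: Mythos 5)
Your framework — barrier dominating on the parabolic boundary, nonlocal bonus from the sub-level set, strict-supersolution contradiction at a touching point — is the right shape, and you correctly zero in on the real obstacle: $(-\lap)^s$ at the touching time $t_0$ only sees the spatial slice $u(\cdot,t_0)$, whereas the hypothesis is a space-\emph{time} measure bound. That is indeed where the argument must do work. But the remedy you propose, an auxiliary propagation lemma asserting $|\{u(\cdot,t)\leq 1/2\}\cap B_R|\geq\mu'$ uniformly for $t\in[-1,0]$, is a genuine gap. Such a statement is essentially of the same strength as the lemma you are trying to prove (it is a measure-to-measure decay estimate for a subsolution of the same inequality), and there is no reason a fixed fraction of the mass of $E$ must persist on every later slice: the diffusion can raise $u$ above any level on a given slice, and only an integrated-in-time quantity is controlled. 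Treating this as a routine "auxiliary barrier argument" is where the proposal breaks down.

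The paper avoids the slicing issue altogether, not by propagating the sub-level set, but by making the barrier amplitude itself a function of time governed by the ODE $m(-2)=0$, $m'(t)=c_0|\{u(\cdot,t)\leq 0\}\cap B_1|-C_1 m(t)$, and proving $u\leq 1-m(t)\eta(x,t)+\ep_0(2+t)$. Integrating the ODE, $m(t)\geq c_0 e^{-2C_1}\mu$ for $t\in[-1,0]$ uses only the space-time measure hypothesis. At the contradiction point $(x_0,t_0)$, the time derivative $-m'(t_0)\eta$ brings exactly $c_0|\{u(\cdot,t_0)\leq 0\}\cap B_1|$ into the inequality, so it is \emph{matched pointwise in time} against the nonlocal bonus; if the slice at $t_0$ happens to have small sub-level measure, $m'(t_0)$ is negative and the barrier relaxes, and the $-C_1 m$ term still closes the contradiction when $\eta$ is not small. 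No uniform-in-time slice estimate is ever needed. A second ingredient you do not mention: the paper's $\eta(x,t)=\beta(|x|+At)$ is a bump that \emph{shrinks at speed exactly} $A$, so that $\eta_t=-A|\grad\eta|$ and the drift contribution cancels identically against $\eta_t$ rather than merely being made small via $\norm{\grad\eta}_{L^\infty}\lesssim 1/A$; the radius $R=2+A$ is chosen so this shrinking bump stays inside $B_R$ over the time interval of length $2$. In short, your diagnosis is right but your cure is not; the time-dependent $m(t)$-barrier is the missing idea that makes the proof close.
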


\begin{remark}
Note that if for some bounded vector field $b$, the function $u$ satisfies the equation
\begin{equation} \label{e:equation-for-u}
 u_t + b \cdot \grad u + (-\lap)^s u = f,
\end{equation}
then $u$ also satisfies the inequality \eqref{e:ineq} with $A = \norm{b}_{L^\infty}$.
\end{remark}

\begin{proof}
Let $m: [-2 , 0] \to \R$ be the function such that:
\begin{equation} \label{e:ODEform}
\begin{aligned}
m(-2) &= 0, \\
m'(t) &= c_0 | \{x \in B_1: u(x,t) \leq 0\}| - C_1 m(t).
\end{aligned}
\end{equation}

The solution of this ODE for $m$ can be computed explicitly
\[ m(t) = \int_{-2}^t c_0 | \{x : u(x,s) \leq 0\} \cap B_1 | e^{-C_1
(t-s)} \dd s. \]

We will show that if $c_0$ is small and $C_1$ is large, then
$u \leq 1 - m(t) + 2 \ep_0$ in $B_1 \times [-1,0]$. 
Since for $t \in [- 1 ,0]$,
\[ m(t) \geq c_0 e^{- 2C_1  } |\{ u \leq 0 \} \cap B_1 \times
[-2  , -1 ] | \geq  c_0 e^{-2C_1} \mu.\]We set $\theta = c_0 e^{- 2C_1 }\mu/2$ for $\ep_0$ small and finish the proof of the Lemma. 

Let $\beta : \R \to \R$ be a fixed smooth non increasing function such that $\beta(x)=1$ if $x \leq 1$ and $\beta(x)=0$ if $x \geq 2$.

Let $\eta(x,t) = \beta(|x|+ A t) = \beta (|x| - A |t|)$. As a function of $x$, $\eta(x,t)$ looks like a bump function for every fixed $t$. The bump shrinks at speed $A$. Recall that $A$ will be the $L^\infty$ bound for the vector field in the drift term, so $\eta$ is a bump that is shrinking at the maximum possible speed.

The strategy of the proof is to show that the function $u$ stays below $1 - m(t)\eta(x,t) + \ep_0 (2+t)$. 

Note that if there was no diffusion, $1 - m \eta(x,t) + \ep_0 (2+t)$ would be a supersolution of the equation for any fixed constant $m$. We would be able to prove that $u<1-\theta$ in $Q_1$ only in the case that $u(x,-2) < 1 - m \eta(x,-2)$. The diffusion is the reason why we can use a variable value of $m$ and obtain an actual improvement of oscillation.

At those points where $\eta = 0$ (precisely where $|x| \geq 2 - A t = 2 + A |t|)$, $(-\lap)^s \eta < 0$. Since $\eta$ is smooth, $(-\lap)^s \eta$ is continuous and it remains negative for $\eta$ small enough. Thus, there is some constant $\beta_1$ such that $(-\lap)^s \eta \leq 0$ where $\eta \leq \beta_1$.

In order to arrive to a contradiction, we assume that $\eta(x,t) > 1 - m(t) + \ep_0 (2+t)$ for some point $(x,t)
\in B_1 \times [-1,0]$. We look at the maximum of the function
\[ w(x,t) = u(x,t) + m(t) \eta(x,t) - \ep_0 (2+t). \]
Since we are assuming that there is one point in $B_1 \times [-1,0]$ where $w(x,t) > 1$, $w$ must be larger than $1$ at the point that realizes the maximum of $w$. Let $(x_0,t_0)$ be the point where this maximum is realized. Since $w(x_0,t_0)>1$, the point $(x_0,t_0)$ must belong to the support of $\eta$. Thus $|x_0| < R$. 

Since $w$ achieves its maximum at $(x_0,t_0)$, then at that point we have $w_t \geq 0$ and $\grad w = 0$. Therefore, $u_t \geq -m' \ \eta -m \ \eta_t + \ep_0$ and $\grad u = -m \grad \eta$. We will now show that the equation for $u$ cannot hold at $(x_0,t_0)$.

We start by estimating $u_t(x_0,t_0)$.
\begin{equation} \label{e:bound-for-vt}
\begin{aligned}
u_t(x_0,t_0) &\geq -m'(t_0) \eta(x_0,t_0) - m(t_0) \partial_t \eta(x_0,t_0) + \ep_0 \\
&= -m'(t_0) \eta(x_0,t_0) + m(t_0) A |\grad \eta(x_0,t_0)|^{q} + \ep_0
\end{aligned}
\end{equation}

We also note that 
\begin{equation} \label{e:bound-for-grad}
|\grad u(x_0,t_0)| = m(t) |\grad \eta(x_0,t_0)|.
\end{equation}

The measure of the set $| \{x : u(x,s) \leq 0\} \cap B_1 |$ is used in the estimation of $(-\lap)^s u$. The computations below are for fixed $t=t_0$, so we omit writing the time $t$ in order the keep the computations cleaner.

Since $u+m \eta$ attains its maximum at $x_0$, $u(x_0) - u(x_0+y) \geq -m \, (\eta(x_0)-\eta(x_0+y))$ for all $y$. Replacing this inequality in the formula for $(-\lap)^s u$ we can easily obtain $(-\lap)^s u(x_0) \geq  -m \ (-\lap)^s \eta(x_0)$. However, we can improve this estimate in terms of the value $|\{u \leq 0\} \cap B_1|$ (this is the key idea from \cite{silvestre2009differentiability}).

Let $y \in \R^n$ be such that $u(x_0+y)\leq 0$. We estimate $u(x_0) + m \, \eta(x_0) - u(x_0+y) - m \, \eta(x_0+y)$ using that $u(x_0)+m \eta(x_0) = w(x_0,t_0) + \ep_0 (1+t_0) > 1$.
\[ u(x_0) + m \, \eta(x_0) - u(x_0+y) - m \, \eta(x_0+y) \geq 0 - m + 1 \]
We choose $c_0$ small so that $m \leq 1/2$ and
\begin{equation} \label{e:est-good-set}
u(x_0) + m \, \eta(x_0) - u(x_0+y) - m \, \eta(x_0+y) \geq \frac 1 2
\end{equation}

Now we estimate $(-\lap)^s u(x_0,t_0)$, we start writing the integral
\begin{align*}
(-\lap)^s u(x_0,t_0) &= \int_{\R^n} \frac{ u(x_0) - u(x_0+y)}{|y|^{n+2s}} \dd y \\
\intertext{We estimate $u(x_0) - u(x_0,y)$ by below with $\eta(x_0+y)-\eta(x_0)$ except at those points where $x_0+y$ is in the \emph{good} set $G := \{u \leq 0\} \cap B_1$ where we use \eqref{e:est-good-set}}
&\geq -m(t_0) (-\lap)^s \eta(x_0,t_0) \\ & \hspace{.5in} + \int_{G} \frac{ (u + m \eta) (x_0) - (u + m \eta) (x_0+y)}{|y|^{n+2s}} \dd y \\
&\geq -m(t_0) (-\lap)^s \eta(x_0,t_0) + \int_{G \setminus B_r} \frac{1/2}{|y|^{n+2s}} \dd y \\
&\geq -m(t_0) (-\lap)^s \eta(x_0,t_0) + c_0 |G \setminus B_r|
\end{align*}
for some universal constant $c_0$ (this is how $c_0$ is chosen in \eqref{e:ODEform}).

We consider two cases and obtain a contradiction in both. Either $\eta(x_0,t_0) > \beta_1$ or $\eta(x_0,t_0) \leq \beta_1$.

Let us start with the second. If $\eta(x_0,t_0) \leq \beta_1$, then $(-\lap)^s \eta(x_0,t_0) \leq 0$, then
\begin{equation} \label{e:bound-case1}
(-\lap)^s u(x_0,t_0) \geq c_0 |G \setminus B_r|
\end{equation}

Replacing \eqref{e:bound-for-vt}, \eqref{e:bound-for-grad} and \eqref{e:bound-case1} into \eqref{e:equation-for-u} we obtain
\[ \ep_0 \geq -m'(t_0) \eta(x_0,t_0) + \ep_0 + c_0 |G| \]
but for any $C_1>0$ this contradicts \eqref{e:ODEform}.

Let us now analyze the case $\eta(x_0,t_0) > \beta_1$. Since $\eta$ is a smooth, compactly supported function, there is some constant $C$, such that $| (-\lap)^s \eta| \leq C$.
Then we have the bound
\[
(-\lap)^s u(x_0,t_0) \geq -m(t_0) (-\lap)^s \eta(x_0,t_0) + c_0 |G| \geq -C m(t_0) - c_0 |G \setminus B_r|
\]
Therefore, replacing in \eqref{e:equation-for-u}, we obtain
\[ \ep_0 \geq -m'(t_0) \eta(x_0,t_0) + \ep_0 + c_0 |G| - C m(t_0)\]

and we have
\begin{equation*} 
- m'(t_0) \eta(x_0,t_0) - C m(t_0) + c_0 |G| \leq 0.
\end{equation*}

We replace the value of $m'(t_0)$ in the above inequality using
\eqref{e:ODEform}
\[  (C_1 \eta(x_0,t_0)- C) m(t_0) + c_0 (1- \eta(x_0,t_0)) |G| \leq 0. \]
Recalling that $\eta(x_0,t_0) \geq \beta_1$, we arrive at a contradiction if $C_1$ is chosen large enough.
\end{proof}

The following Lemma is a crucial step in the proof of H\"older continuity. It says that if a solution of the equation in the unit cylinder $Q_1 = B_1 \times [-1,0]$ has oscillation one, then its oscillation in a smaller cylinder $\Q_{r}$ is less than a fixed constant $(1-\theta)$. This type of Lemmas are very common when proving H\"older regularity results. Sometimes they are called \emph{growth} lemmas, since they say that the oscillation in a cylinder of a solution $u$ necessarily grows when considering a larger cylinder. For nonlocal equations one needs to add an extra condition giving a bound to the oscillation of the function $u$ outside $Q_1$.

\begin{lemma}[diminish of oscillation] \label{l:diminish-of-oscillation}
Assume that $u$ satisfies the following equation in $\Q_1$
\[ u_t - b \cdot \grad u +(-\lap)^s u = f \label{e:d} \]
Where $b$ is a bounded vector field. There are universal constants $\theta>0$, $\alpha>0$ and $\ep_0>0$ (depending on $||b||_{L^\infty}$, the dimension $n$ and $s$) such that if
\begin{align*}
||f||_{L^\infty(Q)} &\leq \ep_0 \\
\osc_{Q_1} u &\leq 1  \\
\osc_{B_M \times [-1,0]} u &\leq |M/r|^\alpha \qquad \text{for all } M > 1.
\end{align*}
where $r$ is a constant depending on $\norm{b}_{L^\infty}$ and $s$. Then
\[ \osc_{\Q_r} u \leq (1-\theta) \]
\end{lemma}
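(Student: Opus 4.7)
The plan is to reduce the statement to Lemma \ref{l:point-estimate} after a normalization, sign change, rescaling, and truncation. Since $\osc_{Q_1} u \leq 1$, by subtracting the midpoint of the range of $u$ on $Q_1$ and multiplying by $2$, I may assume $|u| \leq 1$ on $Q_1$. Pigeonhole applied to the sub-cylinder $B_r \times [-2r^{2s}, -r^{2s}] \subset Q_1$ (valid provided $2r^{2s} \leq 1$) shows that either $\{u \leq 0\}$ or $\{u \geq 0\}$ occupies at least half of this sub-cylinder's measure. Replacing $u$ by $-u$ if needed (which preserves the equation with $b$ replaced by $-b$), I assume it is the former.

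To match the geometry of Lemma \ref{l:point-estimate}, which is stated on a cylinder much larger than $Q_1$, I rescale. Define $w(x, t) := u(rx, r^{2s} t)$. By the scaling computation in Section \ref{s:scaling}, $w$ solves a drift-diffusion equation of the same form with drift bound $r^{2s-1}\norm{b}_{L^\infty}$ and forcing bound $r^{2s}\ep_0$. For $r$ small enough in terms of $\norm{b}_{L^\infty}$ and $s$, the equation for $w$ is valid on a cylinder strictly containing $B_{R'} \times [-2, 0]$ with $R' = 2 + r^{2s-1}\norm{b}_{L^\infty}$, as required by the point estimate; the density hypothesis $|\{w \leq 0\} \cap (B_1 \times [-2, -1])| \geq \mu$ is supplied, after a change of variables, directly by the pigeonhole above. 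This is where the dependence of $r$ on $\norm{b}_{L^\infty}$ and $s$ enters.

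The remaining obstacle is that $w$ is bounded by $1$ only on a finite cylinder (inherited from $|u| \leq 1$ on $Q_1$); outside, it may exceed $1$. I therefore truncate: $\tilde w(x, t) := \min(w(x, t), 1)$, so $\tilde w \leq 1$ on $\R^n$ and $\tilde w = w$ on the bounded region. Using the identity
\[ (-\lap)^s \tilde w(x_0) - (-\lap)^s w(x_0) = c_{n,s} \int_{\R^n} \frac{(w - \tilde w)(x_0 + y)}{|y|^{n+2s}} \dd y, \]
valid wherever $\tilde w(x_0) = w(x_0)$, I see that $\tilde w$ satisfies $\tilde w_t + \tilde b \cdot \grad \tilde w + (-\lap)^s \tilde w \leq \tilde f + T$, where the tail $T$ is controlled using the oscillation-growth hypothesis $\osc_{B_M \times [-1,0]} u \leq (M/r)^\alpha$: for $z$ far from the origin, $(w - \tilde w)(z) \lesssim |z|^\alpha$, so $T \lesssim r^{2s - \alpha}$ provided $\alpha < 2s$. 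Choosing $\alpha$ strictly below $2s$ and then $r$ small enough that $r^{2s}\ep_0 + r^{2s - \alpha}$ stays below the threshold of Lemma \ref{l:point-estimate}, the point estimate applied to $\tilde w$ yields $\tilde w \leq 1 - \theta_0$ on $B_1 \times [-1, 0]$ in $w$-coordinates, which unwinds to $\osc_{Q_r} u \leq 1 - \theta_0/2$.

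The main difficulty is reconciling two competing effects of shrinking $r$: while smaller $r$ damps the tail contribution ($r^{2s - \alpha} \to 0$), it simultaneously inflates the rescaled drift bound $r^{2s - 1}\norm{b}_{L^\infty}$ when $s < 1/2$, enlarging the cylinder $B_{R'}$ required by the point estimate. The parameter $r$ must therefore be calibrated once-and-for-all in terms of $\norm{b}_{L^\infty}$, $n$, and $s$, and this single choice then determines admissible $\alpha$, $\theta$, and $\ep_0$. The fact that the same $r$ appears in the rescaling and in the oscillation-growth hypothesis is not circular; rather, it is exactly what makes the subsequent iteration toward H\"older continuity close up.
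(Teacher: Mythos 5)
Your overall strategy --- normalize so $|u|\le 1$ on $Q_1$, pigeonhole on a sub-cylinder to get the density hypothesis, rescale by $r$ so the point estimate's cylinder fits inside the domain, truncate at height $1$, and estimate the truncation error by the tail of the fractional Laplacian --- is the same as the paper's. The paper's rescaling $\tilde u(x,t)=u(x/(2R),t/(2R)^{2s})$ is exactly your $w$ with $r=1/(2R)$, and the truncation $v=\min(1,2(\tilde u-m))$ plays the role of your $\tilde w$. So the skeleton matches.

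The gap is in the calibration of the tail. You write ``Choosing $\alpha$ strictly below $2s$ and \emph{then} $r$ small enough that $r^{2s}\ep_0+r^{2s-\alpha}$ stays below the threshold of Lemma~\ref{l:point-estimate}.'' Shrinking $r$ for $s<1/2$ drives the rescaled drift bound $A=r^{2s-1}\norm{b}_{L^\infty}$ to infinity, and the admissible forcing level $\tilde\ep_0$ of Lemma~\ref{l:point-estimate} is \emph{not} uniform in $A$: tracing the proof, the constant $c_0$ in the ODE for $m$ is the infimum of $|y|^{-n-2s}/2$ over $y$ with $x_0+y\in B_1$, and since the contact point $x_0$ can range over the support of $\eta$, which has radius of order $A$, one gets $c_0\sim A^{-(n+2s)}$ and hence $\tilde\ep_0\lesssim A^{-(n+2s)}\sim r^{(1-2s)(n+2s)}$. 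Your requirement then reads $r^{2s-\alpha}\lesssim r^{(1-2s)(n+2s)}$, i.e.\ $\alpha\le 2s-(1-2s)(n+2s)$, which is vacuous (no positive $\alpha$ exists) once $s$ is small. So ``take $r$ small'' does not close the loop; it is exactly the circularity you flag in your last paragraph, but you assert without justification that it resolves.

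The paper's proof does not take $r$ small to kill the tail. It fixes $R$ (hence $r=1/(2R)$ and $A$) once and for all by the cylinder requirement $2R\ge 2+(2R)^{1-2s}\norm{b}_{L^\infty}$, which is a fixed-point condition independent of $\alpha$; with $r$ and $A$ now frozen, $\tilde\ep_0$ is a fixed positive number, and the tail
\[
T(\alpha)=\int_{|y|>c}\frac{\bigl(|y|^\alpha-1\bigr)^+}{|y|^{n+2s}}\dd y
\]
is made small by sending $\alpha\to 0$: for fixed $c$ this tends to $0$ by dominated convergence. Your bound $T\lesssim r^{2s-\alpha}$ hides this, since for fixed $r$ it converges to $r^{2s}\neq 0$ as $\alpha\to 0$ --- the implied constant $1/(2s-\alpha)$ is what makes it an overestimate in that regime. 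Replacing your calibration ``fix $\alpha<2s$, shrink $r$'' by the paper's ``fix $r$ by the cylinder constraint, shrink $\alpha$'' repairs the argument; the rest of your proof is sound.
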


\begin{proof}
Let us choose a constant $R>1$ so that $R = 2+||b||_{L^\infty}$ if $s \geq 1/2$ or $R = R^{1-2s} \norm{b}_{L^\infty} + 2$ if $s \leq 1/2$.

We consider the rescaled version of $u$:
\[ \tilde u(x,t) = u(x / (2R), t / (2R)^{2s} ). \]

The function $\tilde u$ satisfies the equation
\[ \tilde u_t + (2R)^{1-2s} b \cdot \grad \tilde u + (-\lap)^s \tilde u = (2R)^{-2s} f. \]
In $B_{2R} \times (-2R,0]$. We want to apply Lemma \ref{l:point-estimate} to $\tilde u$ minus a constant. Note that either with $s \in [1/2,1)$ or $s \in (0,1/2)$ the vector field $(2R)^{1-2s}b$ is bounded by $A = (2R)^{1-2s} \norm{b}_{L^\infty}$.

Let $m = (\sup_{Q_1} u + \inf_{Q_1} u)/2$ be the median value of $u$ in $Q_1$ (and also the median value of $\tilde u$ in $Q_{2R}$).

The function $\tilde u$ will stay either above $m$ of below $m$ in half of the points in $B_1 \times [-2,-1]$ (in measure). More precisely, either $\{\tilde  u \leq m \} \cap (B_1 \times [-2,-1]) \geq |B_1|/2$ or $\{\tilde u \geq m \} \cap (B_1 \times [-2,-1]) \geq |B_1|/2$. Let us assume the former, otherwise we can repeat the proof with $(\sup_{Q_1} u -\tilde u)$ instead of $\tilde u$.

We will conclude the proof as soon as we can apply Lemma \ref{l:point-estimate} to $2(\tilde u-m)$. The only hypothesis we are missing is that $2(\tilde u-m)$ is not bounded above by $1$ outside $B_{2R}$. So we have to consider $v = \min(1,2(\tilde u-m))$ instead, and estimate the error in the right hand side of the equation. We prove that if $\alpha$ is small enough, then $v$ satisfies
\begin{equation} \label{e:kk}
 v_t - A |\grad v| + (-\lap)^s v \leq \tilde \ep_0
\end{equation}
for a small $\tilde \ep_0$ so that we can apply Lemma \ref{l:point-estimate}.

Note that inside $\Q_{2R}$, $\tilde u \leq 1$, thus $v = 2(\tilde u-m)$. The error in the equation in $\Q_{R}$ comes only from the tails of the integrals in the computation of $(-\lap)^s v$. Indeed $v_t=u_t$ and $\grad u= \grad v$. Let us estimate $(-\lap)^s v - (\lap)^s \tilde u$ in $Q_R$.

We choose $\ep_0$ smaller than $\tilde \ep_0$ (the small constant from Lemma \ref{l:point-estimate}) such that 
\begin{align*}
(-\lap)^s v(x,t) - (-\lap)^s \tilde u(x,t) 
&= \int_{x+y \notin B_{2R}} (\tilde u(x+y) - \sup_{Q_1} u)^+ \frac{\dd y}{|y|^{n+2s}} \\
&\leq \int_{x+y \notin B_{2R}} (((2R)^2 |x+y|)^\alpha - 1)^+ \frac{\dd y}{|y|^{n+2s}} \leq \tilde \ep_0 - \ep_0
\end{align*}
if $\alpha$ is chosen small enough.

Thus, $v$ satisfies \eqref{e:kk}. We can apply Lemma \ref{l:point-estimate} to $v$ and conclude the proof with $r:=1/(2R)$.
\end{proof}

\begin{remark}
Note that the diminish of oscillation lemma does not require the vector field $b$ to be H\"older continuous but only bounded. The H\"older continuity of $b$ will be required to prove the H\"older continuity of the solution $u$ if $s <1/2$ because Lemma \ref{l:diminish-of-oscillation} needs to be a applied at all scales. Thus, the regularity of $b$ compensates the bad scaling of the equation. This observation helps obtain some regularity results for some nonlinear supercitical equations in \cite{silvestre2009eventual} and \cite{MR2600693}.
\end{remark}

\section{H\"older regularity}
\label{s:c1a-regularity}

In this section we prove our main result about H\"older continuity. Note that the result stated in this section requires the function to solve the equation only in a cylinder in order to have H\"older continuity in a smaller cylinder. This is a local result, from which the global result of Theorem \ref{t:main} immediately follows.

For the case $s \in [1/2,1)$, the proof is a simple adaptation of the proof in \cite{silvestre2009differentiability}. For $s \in (0,1/2)$ we need to make a change of variables at the beginning following the flow of vector field $b$ in order to obtain the extra cancellation which allows us to use the hypothesis $b \in C^{1-2s}$.

\begin{thm} \label{t:main2}
Let $u$ be a bounded function in $\R^n \times [-1,0]$ which satisfies the following equation in $B_1 \times [-1,0]$
\begin{equation} \label{e:eq}
u_t - b \cdot \grad u + (-\lap)^s u = f
\end{equation}

There there is an $\alpha>0$ such that the function $u$ is $C^\alpha$ at $(0,0)$. Moreover we have the estimate
\begin{equation} \label{e:calpha}
|u(x,t) - u(0,0)| \leq C (|x|^\alpha + |t|^{\alpha/(2s)})(\norm{u}_{L^\infty} + \norm{f}_{L^\infty}) \qquad \text{for every $x \in \R^n$ and $-1 \leq t \leq 0$}
\end{equation}

The constants $C$ and $\alpha$ depend on $s$,  $||b||_{L^\infty}$ and the dimension $n$ only.
\end{thm}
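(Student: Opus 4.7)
The plan is to iterate Lemma \ref{l:diminish-of-oscillation} over the geometric scales $r^k$, $k=0,1,2,\ldots$, where $r,\theta\in(0,1)$ are the constants from that lemma. A successful iteration will yield the geometric oscillation decay $\osc_{\Q_{r^k}} u \leq r^{k\alpha}$, which is in turn equivalent to the pointwise parabolic H\"older estimate \eqref{e:calpha} with the same $\alpha$. By homogeneity (multiplying $u$ by a small factor and subtracting a constant), I first reduce to $\norm{u}_{L^\infty(\R^n\times[-1,0])}\leq 1/2$ and $\norm{f}_{L^\infty}\leq \ep_0$, where $\ep_0$ is the small constant from Lemma \ref{l:diminish-of-oscillation}.

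\textbf{Iteration, case $s\in[1/2,1)$.} I would argue by induction on $k$, with the inductive statement being that
\[ u_k(x,t) := r^{-k\alpha}\bigl(u(r^k x, r^{2sk}t) - c_k\bigr) \]
satisfies $\osc_{\Q_1} u_k \leq 1$, where $c_k$ is a centering constant. By the scaling computation of Section \ref{s:scaling}, $u_k$ solves
\[ (u_k)_t - r^{k(2s-1)}\, b(r^k x, r^{2sk}t)\cdot\grad u_k + (-\lap)^s u_k = r^{k(2s-\alpha)}\, f(r^k x, r^{2sk}t). \]
For $s \geq 1/2$ and $r\leq 1$ the factor $r^{k(2s-1)} \leq 1$, so the rescaled drift is still bounded by $\norm{b}_{L^\infty}$; and for $\alpha \leq 2s$ the forcing remains bounded by $\ep_0$. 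The inductive hypothesis supplies the tail bound $\osc_{B_M\times[-1,0]} u_k \leq (M/r)^\alpha$ for $M \geq 1$, which is precisely the remote-oscillation hypothesis of Lemma \ref{l:diminish-of-oscillation}. Applying that lemma yields $\osc_{\Q_r} u_k \leq 1-\theta$, and if $\alpha$ is chosen small enough that $1-\theta\leq r^\alpha$, this pulls back to $\osc_{\Q_{r^{k+1}}} u \leq r^{(k+1)\alpha}$, closing the induction.

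\textbf{Main obstacle, case $s\in(0,1/2)$.} The obstruction is that $r^{k(2s-1)} \to \infty$ as $k\to\infty$ when $s<1/2$, so the drift in the rescaled equation blows up and the iteration above cannot be run. The hypothesis $b\in C^{1-2s}$ is designed to counteract exactly this, provided we first subtract the drift along a trajectory through the origin. Concretely, I would pick any curve $\phi$ with $\phi(0)=0$ and $\phi'(t)=-b(\phi(t),t)$ (existence holds by Peano, and $\phi$ is Lipschitz with constant $\norm{b}_{L^\infty}$), and define $\tilde u(x,t):=u(x+\phi(t),t)$. Using translation invariance of the fractional Laplacian,
\[ \tilde u_t - \tilde b(x,t)\cdot\grad\tilde u + (-\lap)^s \tilde u = \tilde f, \qquad \tilde b(x,t):=b(x+\phi(t),t)-b(\phi(t),t). \]
The centered drift obeys $|\tilde b(x,t)|\leq \norm{b}_{C^{1-2s}}|x|^{1-2s}$, so at level $r^k$ the rescaled drift satisfies
\[ r^{k(2s-1)}\,|\tilde b(r^k x, r^{2sk}t)| \leq \norm{b}_{C^{1-2s}}|x|^{1-2s}, \]
a bound \emph{uniform in $k$} on $B_1$. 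With this uniform-in-scale control the previous iteration runs for $\tilde u$, and the resulting H\"older estimate transfers back to $u$ via $u(x,t)=\tilde u(x-\phi(t),t)$ and the bound $|\phi(t)|\leq \norm{b}_{L^\infty}|t|$ (possibly at the cost of shrinking $\alpha$ to match the Lipschitz scaling of $\phi$ into the parabolic distance). The entire role of the H\"older hypothesis on $b$ in the supercritical regime is to produce exactly this uniform-in-scale bound after the characteristic change of variables.
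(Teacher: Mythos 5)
Your proposal is correct and follows essentially the same route as the paper's proof: normalize, change variables along a characteristic of $b$ in the supercritical case so that $|\tilde b(x,t)|\lesssim |x|^{1-2s}$, then iterate Lemma \ref{l:diminish-of-oscillation} to get geometric oscillation decay, closing the induction by verifying the tail hypothesis at each scale. Your remark about possibly shrinking $\alpha$ when transferring the estimate back through the (merely Lipschitz-in-$t$) change of variables $x\mapsto x-\phi(t)$ is a point the paper glosses over with the phrase ``the H\"older regularity of $v$ and $\tilde v$ are the same,'' so your version is in fact a bit more careful on that step.
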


\begin{proof}
For any $(x_0,t_0)$, we consider the normalized function
\[ v(x,t) = \frac 1 {2\norm{u}_{L^\infty}+\norm{f}_{L^\infty}/\ep_0} \, u(x,t). \]
where $\ep_0$ is the constant from Lemma \ref{l:diminish-of-oscillation}.

We prove the $C^\alpha$ estimate \eqref{e:calpha} by proving a $C^\alpha$ estimate for $v$ at $(0,0)$. Note that $\osc_{R^n \times [-1.0]} v \leq 1$. Moreover, $v$ is also a solution of an equation like \eqref{e:eq} and the corresponding right hand side $f$ has $L^\infty$ norm less than $\ep_0$

In the supercitical case $s < 1/2$, we need to compensate the bad scaling of the drift term with the required H\"older regularity of $b$. In order to take advantage of this, it is convenient to have $b(0,t)=0$, so that the H\"older continuity translates into a decay in $b$ close to the origin. Let us make a change of variables $\tilde v(x,t) = v(x+A(t),t)$, where $A(t)$ is a solution of the ODE:
\begin{align*}
A(0) &= 0 \\
A'(t) &= b(A(t),0).
\end{align*}
Note that the assumption that $b$ is H\"older continuous does not guarantee the uniqueness of the curve $A(t)$ following the flow of $b$. However, just from the continuity of $b$ there exists one solution $A$. The uniqueness of this solution is not relevant for the proof below.

Thus, $\tilde v$ satisfies the equation
\[\tilde v_t + \tilde b \cdot \grad \tilde v + (-\lap)^s \tilde v = f(x+A(t),t).\]
Where $\tilde b(x,t) = b(x+A(t),t) - b(A(t),t)$ is a H\"older continuous vector field such that $|\tilde b(x,t)| \leq C |x|^{1-2s}$. Note that $(x,t) \mapsto (a+A(t),t)$ is a Lipschitz change of variables, so the H\"older regularity of $v$ and $\tilde v$ are the same. From this point on we will write $v$ and $b$ but we mean $\tilde v$ and $\tilde b$ if we are considering the supercritical case. For the critical or subcritical case, no change of variables is necessary.

Let $r \in (0,1)$ be as in Lemma \ref{l:diminish-of-oscillation}. We will prove the following exponential decay of the oscillation in cylinders for some $\alpha>0$ to be chosen later.
\begin{equation} \label{e:oscillation-in-cylinders}
\osc_{\Q_{r^k}} v \leq r^{\alpha k} \qquad \text{for } k=0,1,2,3,\dots
\end{equation}
The H\"older estimate follows immediately from \eqref{e:oscillation-in-cylinders}.

We prove \eqref{e:oscillation-in-cylinders} by induction in $k$. Indeed, for $k=0$ it is known to be true from the hypothesis $\osc_{\R^n \times [-1,0]} v \leq 1$. Let us assume that it holds until certain value of $k$. We scale again by considering
\[ w(x,t) = r^{-\alpha_0 k} v(r^k x,r^{2sk} t). \]

As it was discussed in section \ref{s:scaling}, $w$ satisfies the equation
\[ w_t - r^{k \alpha (2s-1)} b(r^k x, r^{2sk} t) \cdot \grad w + (-\lap)^s w = r^{k(2s-\alpha)} f
\]
Since we are choosing $\alpha \leq 2s$, $||r^{k(2s-\alpha)} f||_{L^\infty} \leq \ep_0$. 

In the case $2s \geq 1$, $r^{k \alpha (2s-1)} \leq 1$, so the $L^\infty$ norm of the drift term stays bounded. In the case $2s < 1$, we use the H\"older decay of $b$ to get a uniform $L^\infty$ bound for $r^{k \alpha (2s-1)} b(r^k x, r^{2sk} t)$ for $|x| \leq 1$.

From the inductive hypothesis, we also have 
\begin{align}
\osc_{Q_1} w &\leq 1  \label{e:aa1}\\
\osc_{Q_{r^{-k}}} w &\leq r^{-\alpha_0 k} &&  \text{for } k=1,2,\dots  \text{ therefore } \osc_{B_M \times [-1,0]} w(x,t) \leq |M/r|^{\alpha_0}  \text{ for any } M>1. \label{e:aa2}
\end{align}

If $\alpha$ is small enough, we can apply Lemma \ref{l:diminish-of-oscillation} to obtain $\osc_{\Q_r} w \leq 1-\theta$. So, if $\alpha$ is chosen smaller than the one of Lemma \eqref{l:diminish-of-oscillation} and also so that $1-\theta \leq r^{\alpha}$, we have $\osc_{\Q_r} w \leq r^{\alpha}$, which means $\osc_{\Q_{r^{k+1}}} v \leq r^{\alpha (k+1)}$, which finished the proof by induction.

We have finished the proof of \eqref{e:oscillation-in-cylinders} for some small $\alpha>0$ depending on $s$, $||b||_{L^\infty}$ and $n$.
\end{proof}

\section{Weak solutions and generalizations}

In order to keep this article short and simple, we did not state the theorems in their most general form. We analyze in this last section the possible generalizations and the applicability of the result to weak solutions of \eqref{e:main}.

\subsection{Viscosity solutions}
When $b$ is a continuous vector field, the solution $u$ of \eqref{e:main} can be understood in the viscosity sense. Note that in the supercitical case $s < 1/2$, we assume that $b$ is H\"older continuous. 

By definition, a lower semicontinuous function $u$ is a supersolution of \eqref{e:main} in a set $\Omega$ if every time there is a smooth function $\varphi$ such that for some $(x_0,t_0) \in \Omega$ and $\rho>0$, $\varphi(x_0,t_0) = u(x_0,t_0)$ and $\varphi(x,t) \leq u(x,t)$ for all points $(x,t)$ such that $|x-x_0|\leq \rho$ and $t_0 - \rho \leq t \leq t_0$ (we say $\varphi$ touches $u$ from below at $(x_0,t_0)$), then the function
\[ v(x,t) = \begin{cases}
\varphi(x,t) & \text{if } |x-x_0|< \rho \text{ and } |t-t_0|<\rho \\
v(x,t) & otherwise
\end{cases}\]
satisfies the inequality $v_t(x_0,t_0) + b(x_0,t_0) \cdot \grad v(x_0,t_0) + (-\lap)^s v(x_0,t_0) \geq f(x_0,t_0)$.

Correspondingly, an upper semicontinuous function $u$ is a subsolution if every time $\varphi$ touches $u$ from above, the above constructed function $v$ satisfies the opposite inequality. A continuous function $u$ is said to be a solution when it is a subsolution and a supersolution at the same time. For more details on the definition and properties of viscosity solutions for integro-differential equations see \cite{barles2008second} or \cite{caffarelli2009regularity}.

The definition of viscosity solution is appropriate for elliptic or parabolic equations that satisfy a comparison principle. As we can see in the definition, it requires that the operator can be evaluated in smooth test functions. Hence, the vector field $b$ needs to be continuous. On the other hand, the definition of viscosity solutions does not require the equation to be linear. Thus, in the critical and subcritical case, the equation \eqref{e:main} can be replaced by the couple of inequalities
\begin{align*}
u_t + A |\grad u| + (-\lap)^s u &\geq B \\
u_t - A |\grad u| + (-\lap)^s u &\leq B
\end{align*}
where $A = \norm{b}_{L^\infty}$ and $B = \norm{f}_{L^\infty}$.

The proofs presented in this article follow naturally the viscosity solution spirit. Indeed, the adaptation of the proof of Lemma \ref{l:point-estimate} is straight forward since the function $w$ constructed in that proof acts as the smooth test function touching $u$ from above.

\subsection{The vanishing viscosity method}
The results of this paper can be used to study the regularity of solutions to nonlinear equations. In \cite{silvestre2009differentiability}, the H\"older estimates were applied to the derivative of the solution to a Hamilton-Jacobi equation with fractional diffusion. The author is also planning to apply the main theorem in this paper to study the regularity of conservation laws with fractional diffusion. For these nonlinear problems, one can use the vanishing viscosity approximation which makes the solution smooth (depending on the artificial viscosity) and then obtain some estimates that are independent of the viscosity coefficient. In this subsection, we show that our results apply in the vanishing viscosity setting.

For $\ep>0$, we can approximate the solution to \eqref{e:main} with the solution to
\begin{equation} \label{e:vanishing-viscosity}
u^\ep_t + b \cdot \grad u^\ep + (-\lap)^s u^\ep - \ep \lap u^\ep = f.
\end{equation}
Following the arguments in this article, one can prove that the solution $u^\ep$ is H\"older continuous with an a priori estimate independent of $\ep$.

The proof of Lemma \ref{l:point-estimate} applies to \eqref{e:vanishing-viscosity} as long as $\ep$ is bounded above by some constant $C$ (for example by one). Let us restate the lemma here so that it is suitable for the vanishing viscosity method.
\begin{lemma}[point estimate] \label{l:point-estimate-vanishing-viscosity}
Let $u$ be a $C^2$ function, $u \leq 1$ in $\R^n \times [-2,0]$ which satisfies the following inequality in $B_R \times [-2,0]$.
\begin{equation}
u_t - A |\grad u| + (-\lap)^s u - \ep \lap u \leq \ep_0 
\end{equation}
Where $R=2+A$ and $\ep<1$. Assume also that 
\[ \abs{\{u \leq 0\} \cap ( B_1 \times [-2 , -1]) } \geq \mu. \]
Then, if $\ep_0$ is small enough there is a $\theta>0$ such that $u \leq 1-\theta$ in $B_1 \times [-1 , 0]$.

\noindent (the maximal value of $\ep_0$ as well as the value of $\theta$ depend only on $s$ and $n$)
\end{lemma}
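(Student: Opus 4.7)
The plan is to copy the proof of Lemma \ref{l:point-estimate} essentially verbatim, with the barrier $w(x,t) = u(x,t) + m(t)\eta(x,t) - \ep_0(2+t)$ and the ODE \eqref{e:ODEform} defining $m$. All that really needs to be checked is that the extra term $-\ep \lap u$ can be controlled at the contact point using the fact that $w$ attains an interior maximum there, and that the resulting error can be absorbed by enlarging $C_1$.

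I would proceed as follows. Assume for contradiction that there exists $(x_0,t_0) \in B_1 \times [-1,0]$ with $w(x_0,t_0) > 1$, and let $(x_0,t_0)$ be a point where $w$ attains its maximum (which, as in the original proof, must lie in the support of $\eta$, so $|x_0|<R$). Then at $(x_0,t_0)$ we have $w_t \geq 0$, $\grad w = 0$, and the new ingredient: $D^2 w \leq 0$. From the last inequality it follows that $\lap u(x_0,t_0) \leq -m(t_0)\,\lap \eta(x_0,t_0)$, hence $-\ep \lap u(x_0,t_0) \geq \ep\, m(t_0)\,\lap \eta(x_0,t_0) \geq -C \ep\, m(t_0) \geq -C m(t_0)$, using that $\eta$ is smooth and compactly supported and that $\ep < 1$. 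The bounds \eqref{e:bound-for-vt}, \eqref{e:bound-for-grad}, and the lower bound on $(-\lap)^s u$ in terms of the good set $G = \{u \leq 0\} \cap B_1$ go through unchanged, since they depend only on the first-order maximum condition and on the contact inequality $u(x_0) + m\eta(x_0) \geq u(x_0+y) + m\eta(x_0+y)$.

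Plugging these estimates into the inequality $u_t - A|\grad u| + (-\lap)^s u - \ep \lap u \leq \ep_0$, I obtain (in either of the two cases $\eta(x_0,t_0) \leq \beta_1$ or $\eta(x_0,t_0) > \beta_1$) an inequality of the form
\[
-m'(t_0)\,\eta(x_0,t_0) - \bigl(C + C\bigr) m(t_0) + c_0 |G| \leq 0,
\]
where the extra $-Cm(t_0)$ term on the left comes from the viscous contribution $-\ep\lap u$. Substituting the ODE \eqref{e:ODEform} for $m'(t_0)$ then gives
\[
(C_1 \eta(x_0,t_0) - C)\, m(t_0) + c_0\bigl(1 - \eta(x_0,t_0)\bigr)|G| \leq 0,
\]
and choosing $C_1$ large enough (now accommodating the extra constant produced by the Laplacian term) yields a contradiction exactly as before. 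The conclusion then follows with $\theta = c_0 e^{-2C_1}\mu/2$.

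The main obstacle, such as it is, is verifying that $-\ep\lap u$ has the right sign at the contact point; this is why the statement requires $u$ to be $C^2$, so that $D^2 w \leq 0$ at the maximum gives pointwise control of $\lap u$. Everything else is bookkeeping: one new constant, absorbed by enlarging $C_1$, and the constants $\ep_0$ and $\theta$ remain independent of $\ep \in (0,1)$, which is exactly what the vanishing viscosity argument requires.
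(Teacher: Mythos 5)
Your blanket bound $-\ep\lap u(x_0,t_0) \ge -Cm(t_0)$ is correct, but it is too crude to close the argument in the regime $\eta(x_0,t_0)\le\beta_1$. In that case the inequality you derive reads
\[
(C_1\eta(x_0,t_0)-C)\,m(t_0) + c_0\bigl(1-\eta(x_0,t_0)\bigr)|G| \le 0,
\]
and since $\eta(x_0,t_0)$ can be arbitrarily small (the contact point may lie near the edge of the support of $\eta$), no choice of $C_1$ makes $C_1\eta - C$ positive uniformly. Contrast this with the original proof of Lemma~\ref{l:point-estimate}, where the small-$\eta$ case produced $C_1\eta\,m + c_0(1-\eta)|G| \le 0$ with both terms manifestly nonnegative and the first one strictly positive; that contradiction was free, for any $C_1>0$, precisely because no constant term $-Cm$ appeared. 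Once your $-Cm$ is inserted, the free lunch is gone: taking $|G|=0$ and $\eta < C/C_1$ satisfies the inequality, so there is no contradiction.

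The paper patches this by exploiting the sign of the viscous term when $\eta$ is small. Since $\eta$ is a radial bump built from $\beta$, one can choose $\beta$ so that $\lap_x\eta \ge 0$ on the region where $\eta$ is small (this is the same kind of tail computation that gives $(-\lap)^s\eta \le 0$ there). Then $-\ep\lap u \ge \ep\,m\,\lap\eta \ge 0$ at the contact point, and the case $\eta\le\beta_1$ closes exactly as in Lemma~\ref{l:point-estimate} with no extra error. Only in the complementary case $\eta>\beta_1$ do you use the crude bound $-\ep\lap u \ge -Cm$ (with $\ep<1$), which is then genuinely absorbable by enlarging $C_1$ because $\eta$ is bounded below by $\beta_1$. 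You should split the argument accordingly rather than trying to absorb the viscous error by $C_1$ uniformly in $\eta$.
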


The proof is the same as the proof of Lemma \ref{l:point-estimate} except that at the moment when we find a contradiction for the equation at the maximum point of $w$ we also need to estimate $\ep \lap u$. We would have $- \ep \lap u(x_0,t_0) \geq \ep m(t) \lap_x \eta(x_0,t_0)$, and then we would argue that if $\eta$ is small enough $\lap \eta \geq 0$ or if $\eta$ is large, the term $C m(t) \eta$ controls $\ep m(t) \lap_x \eta(x_0,t_0)$ because $\ep < 1$.

Lemma \ref{l:point-estimate} is used at every scale in the iterative improvement of oscillation to prove Theorem \ref{t:main2}. When scaling $u$ to large values of $k$ in that proof the coefficient in front of the Laplacian may grow to values larger than one. The result of Theorem \ref{t:main2} is still valid for values of $\ep$ larger than zero. The proof would follow the following lines. First we do the same iteration to improve the oscillation in cylinders using Lemma \ref{l:point-estimate-vanishing-viscosity}. At some scale $k$, the coefficient $\ep$ in front of the Laplacian becomes larger than one for the first time. At that point we cannot apply Lemma \ref{l:point-estimate-vanishing-viscosity} anymore, but the equation is just a lower order perturbation of the heat equation, so it has H\"older estimates and the decay of the oscillation in the following cylinders follows because of that.

Alternatively, we can apply Lemma \ref{l:point-estimate-vanishing-viscosity} iteratively until we reach some scale $k$ that will be finer the smaller $\ep$ is. So, for every $\ep>0$, we can prove that $|u(x)-u(y)| \leq C|x-y|^\alpha$ for $|x-y|>r^{-k_0}$ just by iterating Lemma \ref{l:point-estimate-vanishing-viscosity} as in the proof of Theorem \ref{t:main2}. When we take $\ep \to 0$, this estimate will hold for all $x$ and $y$ since $k_0 \to \infty$ as $\ep \to 0$. This is the approach that was used in \cite{MR2600693}.

\subsection{Integro-differential diffusions}
The fractional Laplacian $(-\lap)^s$ in \eqref{e:main} could be replaced by general integro-differential operators of order $2s$. These operators have the form
\[ Iu(x) = \int_{\R^n} (u(x) - u(x+y)) \frac{k(x,y)}{|y|^{n+2s}} \dd y \]
The kernel $k(x,y)$ is assumed to be symmetric: $k(x,y) = k(x,-y)$ and bounded below and above: $\lambda \leq k(x,y) \leq \Lambda$. No modulus of continuity with respect to $x$ is necessary.

We now explain how to understand weak solutions of an equation of the form
\begin{equation} \label{e:equation-measurablecoefficients}
 u_t + b \cdot \grad u + \int_{\R^n} (u(x)-u(x+y)) \frac{k(x,y,t)}{|y|^{n+2s}} \dd y = f
\end{equation}
for some kernel $k$, some bounded right hand side $f$ and some vector field $b$ (either bounded or $C^{1-2s}$ depending on $s$). If $s \in (0,1/2)$, the vector field $b$ is continuous and the equation \eqref{e:equation-measurablecoefficients} implies that the following two inequalities hold:
\begin{align*}
 u_t + b \cdot \grad u - M^- u(x) &\geq -B \\
 u_t + b \cdot \grad u - M^+ u(x) &\leq B
\end{align*}
where $B = \norm{f}_{L^\infty}$, and $M^\pm$ stands for the maximal and minimal operators of order $2s$:
\[ M^\pm u(x) = \int_{\R^n} \frac{ \Lambda (u(x+y)+u(x-y)-2u(x))^{\pm} - \lambda (u(x+y)+u(x-y)-2u(x))^{\pm} }{|y|^{n+2s}} \dd y. \]
These operators were first defined in \cite{silvestre2006holder}. Even though it seems hard to make sense of \eqref{e:equation-measurablecoefficients} in any weak sense, the two inequalities above make perfect sense in the viscosity sense, and our H\"older estimates depend on those two inequalities only.

In the case $s \in [1/2,1)$ the vector field $b$ is discontinuous, so the inequalities above do not make sense in the viscosity sense. However, we can replace the term $b \cdot \grad u$ by $\pm A |\grad u|$ respectively, where $A = \norm{b}_{L^\infty}$, in which case the viscosity solution definition can be applied.

In \cite{silvestre2009differentiability}, the H\"older estimates were proved for a function $u$ satisfying both inequalities above in the viscosity sense in the case $s=1/2$.

\section*{Acknowledgment}

Luis Silvestre is partially supported by NSF grant DMS-1001629 and the Sloan fellowship.

\bibliographystyle{plain}   
\bibliography{hapd}
\index{Bibliography@\emph{Bibliography}}%

\end{document}